\newcommand{\be}{\begin{equation} }
\newcommand{\ee}{\end{equation} }
\newcommand{\bee}{\begin{eqnarray} }
\newcommand{\eee}{\end{eqnarray} }
\numberwithin{equation}{section}
\newtheorem{theorem}{Theorem}
\newtheorem{remark}{Remark}
\newtheorem{proposition}{Proposition}
\newcommand{\EE}[1]{\mathbb E}
\newcommand{\E}{\mathbb E}
\begin{document}
\title{On the time constant of high dimensional first passage percolation, revisited} 

\author{Antonio Auffinger}
\address{Department of Mathematics, Northwestern University,  United States of America}
\email{tuca@northwestern.edu}

\author{ Si~Tang}
\address{Department of Mathematics, Lehigh University, United States of America} 
\email{sit218@lehigh.edu}

\keywords{Eden model, first-passage percolation; time constant; high dimension} 

\maketitle



\begin{abstract}In  \cite{AT16highdimenFPP}, it was claimed that the time constant $\mu_{d}(e_{1})$ for the first-passage percolation model on $\mathbb Z^{d}$ is $\mu_{d}(e_{1}) \sim \log d/(2ad)$ as $d\to \infty$, if the passage times $(\tau_{e})_{e\in \mathbb E^{d}}$ are i.i.d., with a common c.d.f. $F$ satisfying 
$\left|\frac{F(x)}{x}-a\right| \le \frac{C}{|\log x|}$ for some constants $a, C$ and sufficiently small $x$. 

However, the proof of the upper bound, namely, Equation (2.1) in \cite{AT16highdimenFPP}
\begin{align}\label{eq:abstract}
\limsup_{d\to\infty} \frac{\mu_{d}(e_{1})ad}{\log d} \le \frac{1}{2}
\end{align}
is incorrect. In this article, we provide a different approach that establishes \eqref{eq:abstract}. As a side product of this new method, we also show that the variance of the non-backtracking passage time to the first hyperplane is of order $o\big((\log d/d)^{2}\big)$ as $d\to \infty$ in the case of the when the edge weights are exponentially distributed.
\end{abstract}

\section{Introduction}

In this paper we study first passage percolation on $\mathbb Z^{d}$ which is defined as follows. At each nearest-neighbor edge in $\mathbb Z^d$, we attach a non-negative random variable $\tau_e$, known as the passage time of the edge $e$.  These random variables $(\tau_e)$ are independent and identically distributed with common distribution $F$. We will also assume that $F$ satisfies the following conditions: {for some $a, C$ and $\epsilon_0>0$,
\begin{align}
\label{eqn:conditionat0}
&\left|\frac{F(x)}{x}-a\right|\le C \cdot |\log x |^{-1}, \text{ for all } x \in [0, \epsilon_0]\\
\label{eqn:finite-expectation}
\text{ and }&\int x dF(x) < \infty.
\end{align}

 A {\it path} $\gamma$ is a sequence of nearest-neighbor edges   in $\mathbb Z^d$ such the starting point of each edge coincides with the endpoint of the previous edge. For any finite path $\gamma$ we define the passage time of $\gamma$ to be
\[
T(\gamma)=\sum_{e \in \gamma} \tau_e.
\]
Given two points $x,y \in \mathbb{Z}^d$ we define
\begin{equation*}\label{definition:passagetime}
T(x,y) = \inf_{\gamma} T(\gamma),
\end{equation*}
where the infimum is over all finite paths $\gamma$ that start at the point $x$ and end at $y$. For a review, we invite the readers to see the book \cite{FPPSurvey} or the classical paper of Kesten \cite{KES86AFPP}.

If $\E \tau_{e} < \infty$, the following limit exists 
\begin{equation*}
 \mu_d(e_1) := \lim_{n \rightarrow \infty} \frac{T(0,ne_1)}{n} \quad \text{ a.s. and in } L^1.
\end{equation*}
and $\mu_d(e_1)$ is called the time constant.
 See \cite[Theorem 2.1]{FPPSurvey} and the discussion therein. }

In this paper, we prove the following limit for $\mu_{d}(e_{1})$, as the dimension $d\to\infty$.
\begin{theorem} \label{thm1}Assume \eqref{eqn:conditionat0} and \eqref{eqn:finite-expectation}. Then
\[
\lim_{d\to \infty} \frac{\mu_{d}(e_{1})d}{\log d} = \frac{1}{2a}.
\]
\end{theorem}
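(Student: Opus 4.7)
First I note that the matching lower bound $\liminf_{d\to\infty}\mu_d(e_1)ad/\log d \ge 1/2$ was established correctly in \cite{AT16highdimenFPP}, so the entire task is to prove the upper bound in \eqref{eq:abstract}.

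\textbf{Reduction to a non-backtracking hyperplane time.} For $k \ge 0$ let $H_k := \{x \in \mathbb{Z}^d : x_1 = k\}$ and $T(0, H_k) := \min_{x \in H_k} T(0, x)$. Subadditivity gives $\mathbb{E}T(0, H_n) \le n\,\mathbb{E}T(0, H_1)$, and the standard identity $\mu_d(e_1) = \lim_{n \to \infty}\mathbb{E}T(0, H_n)/n$ (see \cite{FPPSurvey}) yields $\mu_d(e_1) \le \mathbb{E}T(0, H_1)$. Since non-backtracking paths are a subclass of paths, $T(0, H_1) \le T^{\mathrm{nb}}(0, H_1) := \inf\{T(\gamma): \gamma \text{ non-backtracking from } 0 \text{ to } H_1\}$. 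So the plan reduces to proving
\[\mathbb{E}T^{\mathrm{nb}}(0, H_1) \le (1+o(1))\frac{\log d}{2ad}.\]

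\textbf{Second moment on short non-backtracking paths.} Fix $t = \log d/(2ad)$ and pick $\ell = \ell(d)$ odd with $\ell \to \infty$ but $\ell = o(\log d)$, for instance $\ell = \lceil \log d / \log\log d\rceil$ rounded to the next odd integer. Let $X$ denote the number of non-backtracking paths from $0$ of length exactly $\ell$ ending in $H_1$ with total passage time at most $t$. By the density condition \eqref{eqn:conditionat0}, a Laplace-type convolution estimate gives $\mathbb{P}(\sum_{i=1}^\ell \tau_i \le t) = (1+o(1))(at)^\ell/\ell!$, and a local central limit theorem for non-backtracking random walks (valid for $\ell \ll d$) counts $(1+o(1))(2d)^\ell \sqrt{d/(2\pi\ell)}$ such paths; hence $\mathbb{E}X \to \infty$. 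For the second moment, decompose $\mathbb{E}X^2 = \sum_{(\gamma, \gamma')}\mathbb{P}(T(\gamma)\le t,\, T(\gamma')\le t)$ by the number $j$ of common edges. A Beta-integral identity evaluates the joint probability for tree-like overlaps (shared initial segment of length $j$) as $(1+o(1))\binom{2(\ell-j)}{\ell-j}(at)^{2\ell-j}/(2\ell-j)!$; the number of such ordered pairs is $\sim (2d)^{2\ell-j}$ in high $d$; and non-tree-like overlaps give a negligible correction. Assembling these,
\[\frac{\mathbb{E}X^2}{(\mathbb{E}X)^2} \le 1 + \sum_{j \ge 1}\left(\frac{\ell}{2\log d}\right)^j(1+o(1)) = 1 + o(1),\]
so Paley--Zygmund gives $\mathbb{P}(X \ge 1) \to 1$ and hence $T^{\mathrm{nb}}(0,H_1) \le t$ with high probability.

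\textbf{From concentration to the expectation bound, and the main obstacle.} On $\{X \ge 1\}$ we have $T^{\mathrm{nb}} \le t$; on the complement we control $T^{\mathrm{nb}}$ via the deterministic bound $T^{\mathrm{nb}}(0, H_1) \le \tau_{(0, e_1)}$ together with the integrability in \eqref{eqn:finite-expectation}. To obtain the sharp statement $\mathbb{E}T^{\mathrm{nb}} \le (1+o(1))t$, one must show $\mathbb{E}[\tau_{(0,e_1)}\mathbf{1}_{X=0}] = o(\log d/d)$. The naive Chebyshev bound coming from the previous paragraph only yields $\mathbb{P}(X=0) = O(\ell/\log d)$, which combined with the finiteness of $\mathbb{E}\tau$ is not sharp enough to land in $o(\log d/d)$. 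The principal obstacle, and the crux of the new proof, is to close this gap, for instance via a Janson-type concentration inequality or by iterating the second-moment method at scales $s > t$ to supply a sufficiently rapid upper-tail decay for $\mathbb{P}(T^{\mathrm{nb}} > s)$; this is also the point where the argument of \cite{AT16highdimenFPP} went astray. The same sharpened analysis delivers, as a byproduct, the variance estimate $\operatorname{Var}(T^{\mathrm{nb}}(0, H_1)) = o((\log d/d)^2)$ for exponential edge weights, which is the side result advertised in the abstract.
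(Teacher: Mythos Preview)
Your proposal is not a proof: you explicitly stop at the hard step. After showing $\mathbb P(T^{\mathrm{nb}}(0,H_1)\le t)\to 1$ via Paley--Zygmund, you need to convert this into $\E T^{\mathrm{nb}}(0,H_1)\le (1+o(1))t$, and you write that ``the principal obstacle, and the crux of the new proof, is to close this gap'' --- and then do not close it. Suggesting ``a Janson-type inequality or iterating the second-moment method'' is a plan, not an argument; and you should be aware that this gap is exactly the one that invalidated the original proof in \cite{AT16highdimenFPP}. The naive second-moment bound you obtain, $\mathbb P(X=0)=O(\ell/\log d)$, combined only with $\E\tau_{e_1}<\infty$, cannot give $o(\log d/d)$, so without new input the argument is genuinely stuck.

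Several earlier steps are also under-justified and would need real work: the local CLT count of non-backtracking paths of length $\ell$ landing in $H_1$; the claim that ``non-tree-like overlaps give a negligible correction'' in the second moment (paths can share edges in patterns other than a common initial segment, and handling this correctly in the regime $\ell=o(\log d)$ is delicate); and the small-ball estimate $\mathbb P(\sum_{i=1}^\ell \tau_i\le t)=(1+o(1))(at)^\ell/\ell!$ under only \eqref{eqn:conditionat0}, which requires care when $\ell\to\infty$.

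The paper's route is entirely different and avoids these issues. It first treats the Exponential$(a)$ case, where the Markov property lets one run Dhar's cluster-exploration recursion to control both the first and the second moment of $\tilde s_{0,1}$ directly (Sections~\ref{sec:1st}--\ref{sec:2nd}), yielding $\mathrm{Var}(\tilde s_{0,1})=o((\log d/d)^2)$ and hence convergence in probability. It then transfers the convergence in probability to general $F$ by the quantile coupling $\tau_e^F=F^\ast(1-e^{-a\tau_e})$ (Section~\ref{sec:general}). Finally, it upgrades to $L^1$ by proving uniform integrability of $\{(2ad/\log d)\tilde s_{0,1}^F\}_d$ via Kesten's search-and-cross construction (Section~\ref{sec:ui}); this last step is what replaces, cleanly and correctly, the tail control you are missing.
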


This result was first claimed in  \cite{AT16highdimenFPP}. However, the proof for the upper bound there, namely,
\begin{align*}
\limsup_{d\to\infty} \frac{\mu_{d}(e_{1})ad}{\log d}  \le \frac{1}{2}
\end{align*}
contains an error. Specifically, for the choices of $p, n$ and $x$ in \cite[Equation (3.1)]{AT16highdimenFPP}, the error term is not negligible compared to the main order $(\log d/ d)$ as $d\to\infty$. Here, we fix this error by presenting a new method that also has consequences to point-to-hyperplane passage times.

The main result in this paper is the following.

\begin{theorem}\label{thm:mainUB} Assume \eqref{eqn:conditionat0} and \eqref{eqn:finite-expectation}. The following bound holds:
    \begin{align*}
\limsup_{d\to\infty} \frac{\mu_{d}(e_{1})d}{\log d}  \le \frac{1}{2a}.
\end{align*}
\end{theorem}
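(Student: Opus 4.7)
The plan is to reduce the bound on $\mu_d(e_1)$ to estimating the expected non-backtracking passage time $\mathbb{E}[T^{nb}(0, H_1)]$, and then to analyze the latter via FPP on an explicit cycle-free tree embedded in $H_0$, where the dynamics approximately decouple into a Yule branching process together with an inhomogeneous Poisson process of forward-edge firings.

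For the reduction, Fekete's lemma applied to the subadditive sequence $\mathbb{E}[T(0, H_n)]$ (subadditivity comes from $T(0, H_{m+n}) \le T(0, H_m) + T(v_m, H_{m+n})$ with $v_m$ a hitting point on $H_m$) yields $\lim_n \mathbb{E}[T(0, H_n)]/n \le \mathbb{E}[T(0, H_1)]$. The shape theorem, combined with the symmetry of the lattice in $x_2, \ldots, x_d$ and convexity of $\mu_d$, identifies this limit with $\mu_d(e_1)$. Restricting further to $x_1$-monotone paths only enlarges the passage time, so $\mu_d(e_1) \le \mathbb{E}[T^{nb}(0, H_1)]$, reducing the goal to $\mathbb{E}[T^{nb}(0, H_1)] \le (1+o(1)) \log d/(2ad)$.

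The key step is the embedded tree argument. Inside $H_0 \cong \mathbb{Z}^{d-1}$, build a cycle-free tree $\mathcal{T}$ rooted at $0$ whose vertices are labeled by pairs $(S, \sigma)$ with $S \subset \{2, \ldots, d\}$ and $\sigma: S \to \{+, -\}$, positioned at $\sum_{j \in S} \sigma(j) e_j$; connect $(S, \sigma)$ to $(S \cup \{j\}, \sigma \cup \{j \mapsto \pm\})$ for $j > \max S$. All tree edges are pairwise distinct edges of $\mathbb{Z}^{d-1}$, so the FPP weights on $\mathcal{T}$ are i.i.d.\ $F$; since $\mathcal{T} \subset H_0$, one has $T^{nb}(0, H_1) \le T^{nb}_{\mathcal{T}}(0, H_1) := \min_{v \in \mathcal{T}}(T_{\mathcal{T}}(0, v) + \tau_{v, v+e_1})$. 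For $\mathrm{Exp}(a)$ weights, the reached set $R(t) = \{v \in \mathcal{T} : T_{\mathcal{T}}(0, v) \le t\}$ evolves as a pure-birth process whose per-particle rate is $2a(d-1-\ell)$ at depth $\ell$, which is $2a(d-1)(1 + o(1))$ for the relevant depths $\ell = o(d)$, so $|R(t)|$ is approximately Yule with $\mathbb{E}|R(t)| \approx e^{2a(d-1)t}$. Conditional on $R(\cdot)$, the forward-edge firings form an inhomogeneous Poisson process with total hazard $a|R(t)|$, so solving $\int_0^{T^{nb}_{\mathcal{T}}} a|R(s)|\, ds = E$ for $E \sim \mathrm{Exp}(1)$ gives
\[
T^{nb}_{\mathcal{T}} \approx \frac{\log(1 + 2(d-1)E)}{2a(d-1)},
\]
and hence $\mathbb{E}[T^{nb}_{\mathcal{T}}] = (\log(2(d-1)) - \gamma + o(1))/(2a(d-1)) = \log d/(2ad) \cdot (1+o(1))$, along with $\mathrm{Var}(T^{nb}_{\mathcal{T}}) = O(1/d^2) = o\bigl((\log d/d)^2\bigr)$, which recovers the side product claimed in the abstract.

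The main technical obstacle is extending the Yule-based identity, naturally exact only for $\mathrm{Exp}(a)$ weights, to a general $F$ satisfying only $|F(x)/x - a| \le C/|\log x|$. At the relevant scale $x \sim \log d/d \to 0$, this error is $O(1/\log d)$, so the exponential approximation should give $(1+o(1))$ multiplicative errors in the Yule rates. The rigorous implementation would truncate edge weights at some $x_0 \to 0$ slowly, stochastically couple the truncated $F$-weights with $\mathrm{Exp}(a(1 \pm O(1/\log d)))$-weights, and use the integrability $\int x\, dF(x) < \infty$ to show separately that paths involving any untruncated edge contribute negligibly to $T^{nb}(0, H_1)$, thereby reducing to the exponential case where the Yule analysis is exact.
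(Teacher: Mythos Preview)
Your embedded-tree argument contains a genuine gap. For the tree $\mathcal T$ you define (children of $(S,\sigma)$ obtained by adjoining $j>\max S$), the per-particle branching rate is \emph{not} $2a(d-1-\ell)$ at depth $\ell$: a vertex $(S,\sigma)$ has $2(d-\max S)$ children, which depends on $\max S$, not on $|S|$. The restriction $j>\max S$ is exactly what makes the tree edges disjoint lattice edges, but it also makes the tree far sparser than a $2(d-1)$-regular tree: the number of depth-$\ell$ vertices is $\binom{d-1}{\ell}2^{\ell}\sim (2d)^{\ell}/\ell!$, a factor of $\ell!$ short of what a Yule approximation requires. Consequently $\E|R(t)|\approx\sum_{\ell}\frac{(2d\,at)^{\ell}}{(\ell!)^{2}}=I_{0}\!\bigl(2\sqrt{2adt}\bigr)$ grows only like $\exp\bigl(2\sqrt{2adt}\bigr)$, not like $e^{2adt}$. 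Carrying your own Poisson-hazard computation through with this corrected growth gives $T^{nb}_{\mathcal T}\sim(\log d)^{2}/(8ad)$, a logarithmic factor away from the target $\log d/(2ad)$; indeed, at $t=\log d/(2ad)$ the expected number of fired forward edges is $o(1)$. So the bound $\mu_d(e_1)\le \E T^{nb}_{\mathcal T}$ that your tree furnishes is too weak to prove the theorem.

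The paper avoids this obstacle by not restricting to any tree: Dhar's exploration is run on the full hyperplane $H_{0}$, and the edge-isoperimetric inequality in $\mathbb Z^{d-1}$ guarantees that every cluster of $i$ vertices has at least $2(d-1)i^{(d-2)/(d-1)}$ boundary edges, i.e.\ essentially linear boundary growth. This is precisely what produces the geometric factor $A=1-1/(2d)$ in the recursion $x_i\le (i+s_i)^{-1}+Ax_{i+1}$ and yields the sharp $\log d/(2ad)$ bound. The passage from Exp$(a)$ to general $F$ in the paper then proceeds via a variance bound (obtained by iterating a second-moment version of Dhar's recursion) to get convergence in probability, a quantile coupling $h(t)=F^{\ast}(1-e^{-at})$ to transfer the in-probability statement, and finally Kesten's search-and-cross construction to establish uniform integrability and hence $L^{1}$ convergence. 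If you want to salvage a tree-based proof, you would need a subgraph of $H_0$ with pairwise distinct edges and branching $\sim 2d$ sustained for $\sim\log d$ generations; your $\mathcal T$ does not achieve this.
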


The lower bound  was correctly established in  \cite{AT16highdimenFPP}, which we state below. 
\begin{proposition}[{\cite[Proposition 4.1]{AT16highdimenFPP}}] \label{thm-lowerbd}Assume $F(x)/x\to a$ as $x\to 0$ and \eqref{eqn:finite-expectation} for the passage times,  then
\[
\liminf_{d\to \infty} \frac{\mu_{d}(e_{1})ad}{\log d} \ge \frac{1}{2}.
\]
\end{proposition}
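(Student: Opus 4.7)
The plan is to prove a lower bound on the point-to-hyperplane passage time $\tilde T_n := T(0, H_n)$ with $H_n = \{x \in \mathbb Z^d : x_1 = n\}$, then conclude via $\tilde T_n \le T(0, n e_1)$ and Fekete subadditivity, which gives $\mu_H := \lim_n E[\tilde T_n]/n \le \mu_d(e_1)$. The bound on $\mu_H$ is obtained via a first-moment (Markov) inequality over paths of each length, combined with a Chernoff bound on the sum of edge weights, whose combinatorics are resolved by an explicit generating function for the first-coordinate projection of simple random walk on $\mathbb Z^d$.

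The number of length-$j$ paths from $0$ whose endpoint has first coordinate $n$ equals $N_{n,j} = (2d)^j P(X_j = n)$, where $X_j = \sum_{i=1}^j Y_i$ with $(Y_i)$ i.i.d., $P(Y_i = \pm 1) = 1/(2d)$, and $P(Y_i = 0) = 1 - 1/d$. Integration by parts applied to the assumption $F(x)/x \to a$ gives $\lambda E[e^{-\lambda\tau}] \to a$ as $\lambda \to \infty$, so for every $\eta > 0$ and every $\lambda$ sufficiently large Chernoff produces $P(S_j \le t) \le e^{\lambda t}\bigl(a(1+\eta)/\lambda\bigr)^j$ for $S_j = \sum_{i=1}^j \tau_i$. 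A union bound over paths then yields
\[
P(\tilde T_n \le t) \;\le\; \sum_{j \ge n} N_{n,j}\, P(S_j \le t) \;\le\; e^{\lambda t} \sum_{j \ge n} r^j P(X_j = n), \qquad r := \frac{2da(1+\eta)}{\lambda}.
\]

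The inner sum admits a closed form. Writing $\phi(z) = (1-1/d) + (z + z^{-1})/(2d)$, the series $\sum_{j\ge 0} r^j P(X_j = n)$ equals the $z^n$ Laurent coefficient of $(1 - r\phi(z))^{-1}$ in the annulus $z_- < |z| < z_+$ containing the unit circle, and a partial-fraction computation gives
\[
\sum_{j \ge 0} r^j P(X_j = n) \;=\; \frac{2d}{r} \cdot \frac{z_+^{-n}}{z_+ - z_-},
\]
where $z_\pm$ (with $z_+ z_- = 1$, $z_+ > 1$) are the positive roots of $z^2 - \mu z + 1 = 0$ and $\mu = 2d(1 - r)/r + 2$. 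Thus $P(\tilde T_n \le tn) \le C(d,\lambda)\exp\bigl(n[\lambda t - \log z_+]\bigr)$ for a polynomial prefactor $C$.

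Optimizing $\lambda$ is the final step. Parameterize $r = 1 - \sigma$, so $\lambda = 2da(1+\eta)/(1-\sigma)$ and $\mu = 2d\sigma/(1-\sigma) + 2$. For $\sigma$ in the window $1/d \ll \sigma < 1$ one has $z_+ \sim 2d\sigma$, and the choice $\sigma_\star = 1/\log(2d)$ (the leading-order maximizer of $(1-\sigma)\log(2d\sigma)$) yields
\[
\frac{\log z_+}{\lambda} \;=\; \frac{(1-\sigma_\star)\log(2d\sigma_\star)}{2da(1+\eta)}(1+o(1)) \;=\; \frac{\log d}{2ad}(1+o_d(1)).
\]
Hence for each $\epsilon > 0$ and $d$ large enough, $P\bigl(\tilde T_n \le (1-\epsilon)(\log d/(2ad))\,n\bigr) \to 0$ as $n \to \infty$, so $\mu_H \ge (1-\epsilon)\log d/(2ad)$; sending $\epsilon \to 0$ gives $\liminf_d \mu_d(e_1) ad/\log d \ge 1/2$. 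The principal obstacle is controlling the lower-order corrections: under \eqref{eqn:conditionat0} the $o(1)$ in $\lambda E[e^{-\lambda\tau}] \to a$ is of order $1/\log\lambda$, but it is raised to $j$-th powers with $j$ of order $n\log d$ in the dominant range of the sum, so its effect must be absorbed by a slight multiplicative shift in $r$; and the asymptotic $z_+ \sim 2d\sigma$ has to be established with an explicit error term at the precise scale $\sigma_\star = 1/\log(2d)$, which is the delicate balance point between the Chernoff loss and the generating-function decay.
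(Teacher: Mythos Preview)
The paper does not itself prove this proposition; it is quoted verbatim from \cite[Proposition~4.1]{AT16highdimenFPP} and used as a black box. So there is no ``paper's own proof'' to compare against here, only the original reference. That said, your argument is a correct and essentially complete proof of the lower bound, and it follows the classical route one expects for such statements (union bound over self-avoiding paths, Chernoff on the edge-weight sum, and a path-counting estimate sharp enough to capture the factor $1/2$).

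A few remarks on the execution. First, the union bound over walks $N_{n,j}=(2d)^j P(X_j=n)$ is legitimate only because the geodesic for $\tilde T_n$ is self-avoiding, so that the number of relevant paths is bounded by $N_{n,j}$ while the Chernoff bound $P(S_j\le t)$ applies with $j$ genuinely independent weights; you use this implicitly and it deserves one sentence. Second, your closed form for $\sum_{j\ge 0} r^j P(X_j=n)$ via the Laurent expansion of $(1-r\phi(z))^{-1}$ is correct and is the efficient way to beat the crude $(2d)^j$ count---this is precisely what buys you the constant $1/2$ rather than a weaker constant. Third, the ``principal obstacle'' you flag at the end is in fact already neutralized by your own device of fixing $\eta>0$ before choosing $\lambda$: once you write $E[e^{-\lambda\tau}]\le a(1+\eta)/\lambda$ for all $\lambda\ge \lambda_0(\eta)$, the $j$-th power carries $(1+\eta)^j$ exactly, which is absorbed into $r$, and only at the very end do you send $\eta\to 0$. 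No rate in \eqref{eqn:conditionat0} is needed---only $F(x)/x\to a$, consistent with the hypothesis of the proposition and with Remark~1 in the paper. Finally, the asymptotic $z_+\sim 2d\sigma_\star$ at $\sigma_\star=1/\log(2d)$ is robust: one has $\mu=2d\sigma_\star(1+o(1))\to\infty$ and $z_+=\mu(1+O(\mu^{-2}))$, so $\log z_+=\log(2d)-\log\log(2d)+o(1)$, which suffices.

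In summary: your proof is sound, and since the present paper only cites the result, your write-up would serve as a self-contained substitute for the reference.
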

\begin{remark} Unlike the upper bound, the proof of the lower bound does not require a specific rate of convergence as in \eqref{eqn:conditionat0}, as long as $F(x)/x\to a$ when sending $x\to 0$.
\end{remark}

\begin{proof}[Proof of Theorem \ref{thm1}]
It follows from the combination of Theorem \ref{thm:mainUB} and Proposition \ref{thm-lowerbd}.
\end{proof}

As an outcome of the new method we introduce to prove Theorem \ref{thm:mainUB}, we also obtain the following theorem. Let 
$$\tilde s_{0,1} := \inf\left\{T(\gamma): 
\begin{array}{l}
\gamma: \mathbf 0 \to \mathbb H_{1} \text{ such that except for the end point, }\\
\text{all other vertices on }\gamma \text{ are contained in }\mathbb H_{0}
\end{array}
\right\},$$
where $\mathbb H_{n}:=\{(x_{1}, \ldots, x_{d})\in \mathbb Z^{d}: x_{1}=n\}$ is the $n$-th hyperplane orthogonal to $e_{1}$.
\begin{theorem} \label{thm-convPL1}Assume \eqref{eqn:conditionat0} and \eqref{eqn:finite-expectation}. Then, as $d\to\infty$
\[
\frac{2ad \tilde s_{0,1}}{\log d} \to 1 \text{ in probability and in $L^1$}.
\]
\end{theorem}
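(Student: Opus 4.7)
The plan is to use the identity
\begin{align*}
\tilde s_{0,1} = \min_{v \in \mathbb H_0} \bigl( T^{(0)}(\mathbf 0, v) + \tau^{\uparrow}_v \bigr),
\end{align*}
where $T^{(0)}$ is the FPP passage time within $\mathbb H_0 \cong \mathbb Z^{d-1}$ and $(\tau^{\uparrow}_v)_{v \in \mathbb H_0}$ are iid copies of $\tau$ on the up-edges $(v, v+e_1)$, independent of the $\mathbb H_0$-edge weights. I would establish separately (a) the lower-tail bound $P(\tilde s_{0,1}\le(1-\epsilon)\log d/(2ad))\to 0$ by a first-moment argument, and (b) the matching upper-tail bound $P(\tilde s_{0,1}>(1+\epsilon)\log d/(2ad))\to 0$ together with the expectation bound $\E[\tilde s_{0,1}]\le(1+o(1))\log d/(2ad)$, by analyzing the Eden growth in $\mathbb H_0$. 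Then $Z_d:=2ad\tilde s_{0,1}/\log d\to 1$ in probability by (a) and (b), $\liminf \E[Z_d]\ge 1$ by Fatou, and $L^1$ convergence follows from Scheff\'e's lemma applied to the nonnegative sequence $Z_d$.

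The lower tail (a) is a first-moment/union-bound argument: any path from $\mathbf 0$ to $\mathbb H_1$ with $k$ intermediate steps in $\mathbb H_0$ is one of $(2(d-1))^k$ choices whose passage time is a sum of $k+1$ iid copies of $\tau$. Using the convolution estimate $P(\tau_1+\cdots+\tau_{k+1}\le x)\le (ax(1+o(1)))^{k+1}/(k+1)!$ (valid as $x\to 0$ by \eqref{eqn:conditionat0}), the union bound gives
\begin{align*}
P(\tilde s_{0,1}\le x)\le \frac{\exp\bigl(2(d-1)ax(1+o(1))\bigr)-1}{2(d-1)},
\end{align*}
which is $O(d^{-\epsilon/2})$ at $x=(1-\epsilon)\log d/(2ad)$.

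For (b), I enumerate the vertices of $\mathbb H_0$ by increasing $\mathbb H_0$-passage time $0=t_1<t_2<\cdots$, so $\tilde s_{0,1}=\min_k(t_k+\tau_k^\uparrow)$ with $(\tau_k^\uparrow)$ iid and independent of $(t_k)$. Conditional on $(t_k)$, using $1-F(x)\le\exp(-ax(1-o(1)))$ near $0$,
\begin{align*}
P\bigl(\tilde s_{0,1}>y \bigm| (t_k)\bigr)\le \exp\Bigl(-a(1-o(1))\int_0^y N(s)\,ds\Bigr),\qquad N(s):=|\{k:t_k\le s\}|.
\end{align*}
During the initial Eden exploration (while $|B^{(0)}_t|=O(d^{1+o(1)})$), the fraction of edges exposed from $B^{(0)}_t$ that close onto already-explored vertices is $O(1/d)$, so the growth of $N(t)$ is comparable to that of a pure-birth (Yule) process of per-vertex rate $2(d-1)a$, giving $N(t)\ge W e^{2adt(1-o(1))}$ for a positive random variable $W$ with exponential-type tail. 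Taking expectation over $W$ gives $P(\tilde s_{0,1}>y)\le 2d/(2d+e^{2ady(1-o(1))})$, which tends to $0$ at $y=(1+\epsilon)\log d/(2ad)$; integrating against $dy$ via the substitution $u=e^{2ady}$ then produces $\E[\tilde s_{0,1}]\le \log(2d+1)/(2ad(1-o(1)))=(1+o(1))\log d/(2ad)$.

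The main obstacle is the Yule-type lower bound on $N(t)$: one cannot appeal to Theorem \ref{thm:mainUB} in dimension $d-1$ without circularity, so the collision-rate concentration must be proved from scratch. A natural route is a martingale argument on the exposed-edge count of $B^{(0)}_t$, exploiting that the conditional probability the next added vertex has an already-explored neighbor (besides its parent) is $O(|B^{(0)}_t|/d)$, so the total expected number of collisions up to step $K=O(d^{1+\epsilon})$ is $O(K^2/d)$, negligible compared with the $\sim K d$ exposed edges that drive the Yule dynamics. Once this concentration is in place, the remainder reduces to the convolution/union-bound calculus above and the standard Fatou-plus-Scheff\'e upgrade to $L^1$.
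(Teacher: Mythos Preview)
Your lower-tail bound (a) and the Fatou--Scheff\'e upgrade to $L^1$ are both sound; the latter is a clean alternative to the paper's route, which instead proves uniform integrability directly via Kesten's search-and-cross construction. Where the paper differs most is in the upper tail: it first handles the Exponential case by Dhar's cluster iteration for $\E[\tilde s_{0,1}]$ (Section~\ref{sec:1st}) together with a new second-moment iteration yielding $\text{Var}(\tilde s_{0,1})=o((\log d/d)^2)$ (Section~\ref{sec:2nd}), and only then transfers convergence in probability to general $F$ via the quantile coupling $\tau_e^F=F^{\ast}(1-e^{-a\tau_e})$ (Section~\ref{sec:general}).

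The genuine gap in your proposal is part (b). The Yule comparison $N(t)\ge W\,e^{2adt(1-o(1))}$ and the collision-count martingale you outline both presuppose a Markovian growth with exponential holding times: phrases like ``the next added vertex'' and ``conditional probability of collision at step $k$'' make sense only in the Eden (Exponential) model, where the cluster process in $\mathbb H_0$ is a Markov chain. For a general $F$ the growth $N(t)$ is not Markovian, there is no step-$k$ transition, and the relevant branching object is an age-dependent (Crump--Mode--Jagers) process whose Malthusian parameter is only asymptotically $2(d-1)a$ and whose limit martingale has no explicit Exponential tail; extracting the claimed bound $P(\tilde s_{0,1}>y)\le 2d/(2d+e^{2ady(1-o(1))})$ from that is far from automatic. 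You therefore need either to run the Yule/collision argument in the Exponential model and then transfer it to $F$ by a coupling---exactly the step the paper supplies and your proposal omits---or to control the CMJ comparison for $F$ uniformly in $d$, which is substantially harder than the sketch suggests. Even restricted to the Exponential case, the assertion that the step-$k$ collision probability is $O(k/d)$ is only heuristic; the paper sidesteps this entirely by inserting the deterministic isoperimetric lower bound $S\ge 2(d-1)i^{(d-2)/(d-1)}$ into Dhar's recursion, which controls boundary growth without ever needing to track collisions.
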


Let us make a few comments about our strategy and how the rest of the paper is organized. Our approach will first consider (the edge version of) the Eden model \cite{Eden}, i.e., the FPP model where  the passage times are  i.i.d. Exponential$(a)$.  Under the Exponential setting, we will make use of Dhar's exploration idea (see \cite{DHA88FPPManyDimen}), and then combine with an appropriate coupling between the $F$-distribution and the Exponential$(a)$ distribution (a similar coupling was also used in \cite[Section 6]{MAR18FPPcartessian}).

More precise, Dhar used in \cite{DHA88FPPManyDimen} a cluster exploration process to predict that for the Exponential$(a)$ FFP model
\begin{equation}\label{eq:Dharpre}
 \limsup_{d\to\infty}\frac{2ad}{\log d}\E \tilde s_{0,1}\le 1.
\end{equation}
By a  standard subadditivity argument (see, e.g., \cite[Theorem 4.2.5]{HW65}, \cite[pp.246]{KES86AFPP} or \cite[Lemma 5.2]{SW78}), we know that $\mu_{d}(e_{1})$ is no larger than $ \E \tilde s_{0,1}$. So Equation \eqref{eq:Dharpre} implies an upper bound for $\mu_{d}(e_{1})$ in the Exponential$(a)$ case. Proposition \ref{thm-lowerbd} then implies that 
\begin{equation*}
\E \frac{2ad}{\log d}\tilde s_{0,1}\to 1.
\end{equation*}

The main obstacle that prevents us from directly generalizing this result to other distributions is that the exploration process will only provide convergence in expectation for $\tilde s_{0,1}$, which is not enough for our purposes. Thus, we first establish a convergence-in-probability result $\frac{2ad}{\log d}\tilde s_{0,1}\xrightarrow{P} 1$ for the Exponential$(a)$-weighted case by showing
\[
\text{Var}(\tilde s_{0,1}) = o \left(\bigg(\frac{\log d}{d}\bigg)^{2}\right ), \quad \text{as } d\to \infty,
\]
expanding on Dhar's cluster exploration idea (see Sections \ref{sec:1st} and \ref{sec:2nd}). 

In Section 4, we derive a coupling between $F$-distributed FPP and the Eden model that preserves the convergence in probability.  Finally, in Section \ref{sec:ui}, we show that the collection of random variables $\{(2ad/\log d )\, \tilde s_{0,1}\}_{d\ge 1}$ is uniformly integrable, which completes a proof of the $L_1$-convergence of $\frac{2ad}{\log d}\tilde s_{0,1}\to 1$ in the $F$-distributed case. 

\subsection{Acknowledgements} A.A's research is partially supported by Simons Foundation SFARI (597491-RWC), NSF Grant 2154076, and a Simons Fellowship. S.T.'s research is partially supported by the Collaboration Grant (\#712728) from the Simons Foundation and the LEAPS-MPS Award (DMS-2137614) from NSF. Both authors thank Wai-Kit Lam for pointing out a simple argument for the uniform integrability of $\{(2ad/\log d )\, \tilde s_{0,1}\}_{d\ge 1}$ and for reading an earlier draft. Both authors thank an anonymous referee for many useful comments that improved the presentation of this paper.

\section{\label{sec:1st}A recap of Dhar's exploration idea in the Exponential case}
In this section, we  recap Dhar's cluster exploration idea, where we introduce the notations and fill in some technical details that were omitted in \cite{DHA88FPPManyDimen}.  Without loss of generality, we assume $a=1$ and consider the standard Exponential case. Note that the exploration idea works nicely because in this case, the first-passage percolation model is Markovian: given the configuration of already-infected sites at any time $t$, the time until next infection is independent of the history before time $t$.  

Consider any infected cluster $C\subset \mathbb H_{0}$ that contains $i$ vertices and $S$ perimeter edges within $\mathbb H_{0}$. Let $T(C)$ denote the waiting time until infection reaches $\mathbb H_{1}$ using a non-backtracking path (i.e., one of those paths in the definition of $\tilde s_{0,1}$). There are $i+S$ possible edges to cross for the next infection to happen, where $i$ of them are along the $e_{1}$ direction and leading to $\mathbb H_{1}$ (denoted by $f_{1}, f_{2}, \ldots, f_{i}$) and $S$ of them remain in the hyperplane $\mathbb H_{0}$ (denoted by $f_{i+1}, \ldots, f_{i+S}$). The passage times of these $i+S$ edges are i.i.d. Exponential(1) random variables. Let $Y$ denote the edge being crossed when next infection occurs (i.e., one of the boundary edges of $C$ with the smallest passage time), and we have
\begin{align*}
\E [T(C)] = \E \bigg[\E [T(C)|Y]\bigg] &= \sum_{j=1}^{i+S} \E [T(C)|Y=f_{j}] P(Y=f_{j}) \\
&= \sum_{j=1}^{i+S} \E [T(C)|Y=f_{j}]  \cdot\frac{1}{i+S}.
\end{align*}
Given that $Y= f_{1}, \ldots, f_{i}$, then the conditional distribution of $T(C)$ is the same as the minimum of $(i+S)$ i.i.d. Exponential(1) random variables and thus
$$\E [T(C)|Y=f_{j}] = \frac{1}{i+S} \quad \text{for }j=1,2,\ldots, i.$$
Given that $Y=f_{i+1}, \ldots, f_{i+S}$, then the next infection will cross a perimeter edge of $C$ within $\mathbb H_{0}$ and expand the infected cluster to a larger cluster $C'$ of $(i+1)$ vertices. In this case, the conditional distribution of $T(C)$ is the same as 
\[
\text{Exponential}(i+S) + T(C\cup f_{j})
\]
where the Exponential random variable is independent of $T(C\cup f_{j})$, due to the Markov property. In this case, we have
$$\E [T(C)|Y=f_{j}] = \frac{1}{i+S} + \E [T(C\cup f_{j})] \quad \text{for }j=i+1, i+2,\ldots, i+S.$$
Combining these together, we obtain
\begin{align*}
\E [T(C)] &=\left\{\frac{i}{i+S} + \sum_{j=i+1}^{i+S}  \bigg[\frac{1}{i+S} + \E [T(C\cup f_{j})]\bigg]\right\} \cdot  \frac{1}{i+S}\\
&=\frac{1}{i+S}\bigg[1 +\sum_{C'}  \E [T(C')]\bigg],
\end{align*}
where the sum is over all clusters $C'\subset \mathbb H_{0}$ of $(i+1)$ infected vertices that can be obtained from cluster $C$ by infecting one additional healthy vertex adjacent to $C$. For each $i=1,2,3, \ldots$, define
\[
x_{i}:=\max_{C: C\subset \mathbb H_{0}, |C|=i} \E T(C).
\]
Note that $x_1=\E \tilde s_{0,1}$ is the quantity of interest here. Taking the maximum over all clusters $C'$, we obtain
\begin{align}
\label{eqn:iter-1pre}
\E[T(C)] \le \frac{1}{i+S}[1+Sx_{i+1}] = \frac{1}{i+S} + \frac{1}{i/S+1}x_{i+1}.
\end{align}
For any cluster $C\subset \mathbb H_{0}$ of $i$ vertices, the number of its perimeter edges in $\mathbb H_{0}$ is bounded above by $i$ times $2(d-1)$, the maximum number of edges in $\mathbb H_{0}$ adjacent to a given vertex. Meanwhile, by the edge-isoperimetric inequality on $\mathbb Z^d$ \cite[Theorem 8]{BL}, one has for any set in a box $\{0, \ldots, \ell\}^{d-1} \subset \mathbb Z^{d-1}$ of cardinality $i \leq \ell^{d-1}/2$  the number of perimeter edges is bounded below by
\begin{equation}\label{EBineq}
\min_{1\leq k \leq d-1} \left \{ 2k i^{1-\frac{1}{k}}\ell^{\frac{d-1}{k}-1}\right \}.
\end{equation}
Choosing $\ell = i$, with $i>3$, the minimum of \eqref{EBineq} is achieved at $k=d-1$ and thus we obtain the bounds


\begin{align}\label{eq:edgeboundary}
s_{i}:=2(d-1)i^{\frac{d-2}{d-1}}\le S\le (2d-1)i.
\end{align}
Here, we also used the fact that the lower bound in \eqref{eq:edgeboundary} for $i=1,2,3$ can be easily checked case by case as follows:
\begin{align*}
i=1, &\ \  \ S =2(d-1) \ge s_1,\\
i=2, &\ \  \ S =4(d-1)-2 \ge s_2, \\
i=3, &\ \ \ S =6(d-1)-4 \ge s_3,
\end{align*}
as long as $d$ is large enough.
Therefore, using \eqref{eq:edgeboundary}, we have

\begin{align}
\label{eqn:geometry}
\frac{1}{i+S} \le \frac{1}{i+s_{i}}\quad \text{and}\quad \frac{1}{i/S+1}\le \frac{1}{1/(2d-1)+1} = 1-\frac{1}{2d}.
\end{align}
Thus, we bound the right side of \eqref{eqn:iter-1pre} by
\[
\E[T(C)] \le \frac{1}{i+s_{i}}+\left(1-\frac{1}{2d}\right)x_{i+1}.
\]
Write $A=1-\frac{1}{2d}$ for notation simplicity. Now taking the maximum over all possible cluster $C\subset \mathbb H_{0}$ with $i$ vertices yields an iterative inequality
\begin{align}
\label{eqn:iter-x}
x_{i}\le \frac{1}{i+s_{i}}+Ax_{i+1}.
\end{align}
 Iterating the relation, we get an upper bound for $\E \tilde s_{0,1}$
\begin{align}
\notag
\E \tilde s_{0,1} &= x_{1} \le \frac{1}{1+s_{1}} + Ax_{2}\le \frac{1}{1+s_{1}} + A\left[\frac{1}{2+s_{2}}+Ax_{3}\right] \\
\label{eqn:1st-sum-bound}
&\le \cdots \le \frac{1}{1+s_{1}} +\frac{A}{2+s_{2}} + \frac{A^{2}}{3+s_{3}}+\cdots \le \frac{1}{1+s_{1}}  + \sum_{n=2}^{\infty}\frac{A^{n-1}}{s_{n}}.
\end{align}
The reason to single out the first term in the summation is to avoid integrating from 0 when bounding the infinite sum by an integral in the next step. Plugging in the expression for $s_{i}$, we get 
\begin{align}
\notag
\E \tilde s_{0,1} 
&\le  \frac{1}{2d-1}  +\frac{1}{2(d-1)}\sum_{n=2}^{\infty}A^{n-1}\frac{1}{n^{\frac{d-2}{d-1}}}\\
\label{eqn:upbd-s01}
&\le \frac{1}{2d-1} + \frac{1}{2(d-1)}\int_{1}^{\infty}A^{x-1}\frac{1}{x^{\frac{d-2}{d-1}}} dx.
\end{align}
We break the integral above into two parts: $\int_{1}^{2d}$ and $\int_{2d}^{\infty}$. For the first integral, we have
\begin{align*}
&\ \ \  \int_{1}^{2d}A^{x-1}\frac{1}{x^{\frac{d-2}{d-1}}} dx  =\int_{1}^{2d}A^{x-1}\frac{x^{\frac{1}{d-1}}}{x} dx\le \frac{(2d)^{\frac{1}{d-1}}}{A}\int_{1}^{\infty}\frac{A^{x}}{x} dx. 
\end{align*}
Notice that as $d\to \infty$, $(2d)^{\frac{1}{d-1}}\to 1$, $A\to 1$ and 
$\int_{1}^{\infty}\frac{A^{x}}{x} dx \sim \log d.$
To see the last asymptotic, we apply L'Hopital's rule and get
\begin{align}
\notag
& \lim_{d\to\infty}\frac{\int_{1}^{\infty}A^{x}\frac{1}{x} dx}{\log d} = \lim_{d\to\infty}\frac{\frac{1}{2d^{2}A}\int_{1}^{\infty}A^{x} dx}{1/d}\\
\label{eqn:lhopital-1}
&=\lim_{d\to\infty} \frac{1}{2dA}\cdot \frac{\left.(1-\frac{1}{2d})^{x}\right |_{x=1}^{\infty}}{\log (1-1/(2d))} = \lim_{d\to\infty} \frac{1}{2d}\cdot \frac{\frac{1}{2d}-1}{-\frac{1}{2d}}=1.
\end{align}
For the second integral, we get
\begin{align*}
 \int_{2d}^{\infty}\left(1-\frac{1}{2d}\right)^{x-1}\frac{1}{x^{\frac{d-2}{d-1}}} dx& \le \frac{1}{(2d)^{\frac{d-2}{d-1}}}  \int_{2d}^{\infty}\left(1-\frac{1}{2d}\right)^{x-1}dx \\
&= \frac{(2d)^{\frac{1}{d-1}}}{2d} \cdot \frac{\left.(1-\frac{1}{2d})^{x}\right |_{x=2d}^{\infty}}{\log (1-1/(2d))}\\
&= (2d)^{\frac{1}{d-1}}\cdot \frac{-(1-\frac{1}{2d})^{2d}}{2d\log (1-1/(2d))} \to e^{-1}.
\end{align*}
Thus, combining the two integrals, we have as $d\to \infty$, 
$$\int_{1}^{\infty}\left(1-\frac{1}{2d}\right)^{x-1}\frac{1}{x^{\frac{d-2}{d-1}}} dx \sim \log d.$$
Plugging this back to \eqref{eqn:upbd-s01}, we prove $\E \tilde s_{0,1}\le \frac{\log d}{2d}(1+o(1))$ for $d$ large. Combining with Proposition \ref{thm-lowerbd}, we conclude 
\begin{equation*}
\E \tilde s_{0,1} \sim \frac{\log d}{2d}
\end{equation*}
as $d\to\infty$ in the i.i.d. Exponentially-weighted case.
\section{\label{sec:2nd}Iteration of the second moment in the Exponential case}
In this section, we prove that $\E \tilde s_{0,1}^{2} \le \left(\frac{\log d}{2d}\right)^{2}(1+o(1))$ as $d\to\infty$ in the Exponential case. Thus, combining with the first moment result, 
$\E \tilde s_{0,1} \sim \frac{\log d}{2d}$, 
we achieve $\displaystyle \text{Var} (\tilde s_{0,1}) = o \left(\bigg(\frac{\log d}{d}\bigg)^{2}\right )$ as desired. Like before, consider any already-infected cluster $C\subset \mathbb H_{0}$ with $i$ infected vertices and $S$ perimeter edges within $\mathbb H_{0}$, and define $T(C)$ and $Y$ in the same way. 
For each $i=1,2,3, \ldots$, define
\[
y_{i}:=\max_{C: C\subset \mathbb H_{0}, |C|=i} \E T^{2}(C). 
\]
We now derive an iterative inequality that relates $y_{i}$ with $y_{i+1}$ (and also $x_{i+1}$, see below). Again, using law of total expectation, we get
\begin{align*}
\E [T^{2}(C)] & = \E \bigg[\E [T^{2}(C)|Y]\bigg]=\sum_{j=1}^{i+S} \E [T^{2}(C)|Y=f_{j}] P(Y=f_{j}) \\
&= \sum_{j=1}^{i+S} \E [T^{2}(C)|Y=f_{j}]  \cdot\frac{1}{i+S} .
\end{align*}
Like before, given $Y= f_{j}$ for $j=1, \ldots, i$, then $T(C)$ follows an Exponential$(i+S)$ distribution and 
$$\E [T^{2}(C)|Y=f_{j}]  =\frac{2}{(i+S)^{2}},\ \ j=1,2,\ldots, i,$$
whereas given $Y=f_{j}$ for $j=i+1,\ldots, i+S$, $T(C)$ has the same distribution as Exponential$(i+S) + T(C\cup f_{j})$ distribution. Thus, recalling that $x_{i}=\max_{C: C\subset \mathbb H_{0}, |C|=i} \E T(C)$, we have
\begin{align*}
\E [T^{2}(C)|Y=f_{j}] &=\frac{2}{(i+S)^{2}} + \E[T^{2}(C\cup f_{j})] + 2 \cdot \frac{1}{i+S}\cdot  \E[T(C\cup f_{j})] \\
&\le \frac{2}{(i+S)^{2}} + y_{i+1} + \frac{2}{i+S} x_{i+1}, \ \ j=i+1,\ldots, i+S.
\end{align*}
Therefore, we get
\begin{align*}
\E [T^{2}(C)] &\le \left\{ \frac{2i}{(i+S)^{2}} +  \frac{2S}{(i+S)^{2}} + Sy_{i+1} + \frac{2S}{i+S} x_{i+1}\right\} \cdot\frac{1}{i+S} \\
&=\frac{2}{(i+S)^{2}}(1+Sx_{i+1}) + \frac{S}{i+S}y_{i+1}\\
&\le \frac{2}{(i+s_{i})^{2}}+\frac{2}{i+s_{i}}\bigg(1-\frac{1}{2d}\bigg)x_{i+1} + \bigg(1-\frac{1}{2d}\bigg)y_{i+1}
\end{align*}
where, in the last step, we use \eqref{eqn:geometry} again to make sure that the right hand side does not depend on the boundary size $S$ of the cluster $C$. Again, we write $A=1-\frac{1}{2d}$. Now taking the maximum over all clusters of size $i$ on the left hand side, we get an iterative inequality
\begin{align}
\label{eqn:iter-y}
y_{i}\le \frac{2}{(i+s_{i})^{2}}+\frac{2A}{i+s_{i}}x_{i+1} + Ay_{i+1}.
\end{align}
Keeping iterating this relation together with \eqref{eqn:iter-x} and noticing that $\E \tilde s_{0,1}^{2} = y_{1}$, we achieve an upper bound for $\E \tilde s_{0,1}^{2}$, which is of the following form
\begin{align*}
\E \tilde s_{0,1}^{2} &=y_{1}\le \frac{2}{(1+s_{1})^{2}}+\frac{2A}{1+s_{1}}x_{2} + Ay_{2}\\
&\le \frac{2}{(1+s_{1})^{2}}+\frac{2A}{1+s_{1}}\left[\frac{1}{2+s_{2}}+Ax_{3}\right] 
 +A\left[\frac{2}{(2+s_{2})^{2}}+\frac{2A}{2+s_{2}}x_{3} + Ay_{3}\right]\\
&=\frac{2}{(1+s_{1})^{2}} + \frac{2A}{2+s_{2}}\left[\frac{1}{1+s_{1}}+\frac{1}{2+s_{2}}\right] + 2 A^{2}\left[\frac{1}{1+s_{1}}+\frac{1}{2+s_{2}}\right] x_{3}+ A^{2}y_{3}\\
&\le \frac{2}{(1+s_{1})^{2}} + \frac{2A}{2+s_{2}}\left[\frac{1}{1+s_{1}}+\frac{1}{2+s_{2}}\right] +\frac{2A^{2}}{3+s_{3}}\left[\frac{1}{1+s_{1}}+\frac{1}{2+s_{2}}+\frac{1}{3+s_{3}}\right] \\
&\qquad + 2A^{3}\left[\frac{1}{1+s_{1}}+\frac{1}{2+s_{2}}+\frac{1}{3+s_{3}}\right] x_{4} + A^{3}y_{4} \le \cdots\\
&\le 2\sum_{n=1}^{\infty}\frac{A^{n-1}}{n+s_{n}}\sum_{k=1}^{n}\frac{1}{k+s_{k}} =2\sum_{k=1}^{\infty}\frac{1}{k+s_{k}}\sum_{n=k}^{\infty}\frac{A^{n-1}}{n+s_{n}}.
\end{align*}
where each inequality follows plugging-in Equations \eqref{eqn:iter-x} and \eqref{eqn:iter-y}, and the equalities are simple expansions of the brackets.
Again, we use integral to bound the double sum from above. First, for each $k$ fixed, $\frac{A^{n-1}}{n+s_{n}}$ decreases in $n$, and thus the inner sum over $n$ is no more than
$$\sum_{n=k}^{\infty}\frac{A^{n-1}}{n+s_{n}} \le \int^{\infty}_{k-1}\frac{A^{y-1}}{y+s_{y}}dy.$$
Moreover, since $\frac{1}{k+s_{k}}\le \frac{1}{s_k}$ and  $\frac{1}{s_k}\int^{\infty}_{k-1}\frac{A^{y-1}}{s_{y}}dy$  decreases in $k$, we have
\begin{align*}
\E \tilde s_{0,1}^{2} &\le 2\sum_{k=1}^{\infty}\frac{1}{k+s_{k}}\sum_{n=k}^{\infty}\frac{A^{n-1}}{n+s_{n}} \\
&\le \frac{2}{1+s_{1}}\sum_{n=1}^{\infty} \frac{A^{n-1}}{n+s_{n}}+\frac{2}{2+s_{2}}\sum_{n=2}^{\infty}\frac{A^{n-1}}{s_{n}}+ 2\sum_{k=3}^{\infty}\frac{1}{s_{k}}\sum_{n=k}^{\infty}\frac{A^{n-1}}{s_{n}}\\
&\le \frac{2}{1+s_{1}}\sum_{n=1}^{\infty} \frac{A^{n-1}}{n+s_{n}} +\frac{2}{2+s_{2}}\sum_{n=2}^{\infty}\frac{A^{n-1}}{s_{n}} + 2\int_{2}^{\infty}\frac{1}{s_{x}} \int^{\infty}_{x-1}\frac{A^{y-1}}{s_{y}}dydx.
\end{align*}
Here, the reason to single out the first two terms is again to avoid integrating from 0 when applying an integral approximation in the next step. We note that $s_{1}=2(d-1)$ and $s_{2}\ge d$, thus the first two terms are smaller order of $(\log d/d)^{2}$, i.e., 
\begin{align*}
\frac{2}{1+s_{1}}\sum_{n=1}^{\infty} \frac{A^{n-1}}{n+s_{n}}&\le \frac{2}{(1+s_{1})^{2}}+\frac{2}{1+s_{1}}\sum_{n=2}^{\infty}\frac{A^{n-1}}{s_{n}}\\
&\le \frac{2}{(2d-1)^{2}}+ \frac{2}{2d-1} \cdot \frac{C\log d }{d} = o\big( (\log d/d)^{2}\big),\text{ and }\\
\frac{1}{2+s_{2}}\sum_{n=2}^{\infty}\frac{A^{n-1}}{s_{n}} &\le \frac{1}{d}\sum_{n=2}^{\infty}\frac{A^{n-1}}{s_{n}} \le \frac{C\log d}{d^{2}} =o\big( (\log d/d)^{2}\big),
\end{align*}
where we used the fact that $\sum_{n=2}^{\infty}\frac{A^{n-1}}{s_{n}}\le \frac{C\log d}{d}$; see the last term in \eqref{eqn:1st-sum-bound}. It remains to show that  for  $d$ sufficiently large 
\begin{align*}
2\int_{2}^{\infty}\frac{1}{s_{x}} \left(\int^{\infty}_{x-1}\frac{A^{y-1}}{s_{y}}dy\right)dx \le \left(\frac{\log d}{2d}\right)^{2} (1+o(1)).
\end{align*}
We break the integral into three parts:
\begin{align*}
\text{(I)}&:=2\int_{2}^{2d+1}\frac{1}{s_{x}} \left(\int^{2d}_{x-1}\frac{A^{y-1}}{s_{y}}dy\right)dx, \\
\text{(II)}&:=2\int_{2}^{2d+1}\frac{1}{s_{x}} \left(\int^{\infty}_{2d}\frac{A^{y-1}}{s_{y}}dy\right)dx, \\
\text{(III)}&:= 2\int_{2d+1}^{\infty}\frac{1}{s_{x}} \left(\int^{\infty}_{x-1}\frac{A^{y-1}}{s_{y}}dy\right)dx.
\end{align*}
For (III), when $x\ge 2d+1$ and $y\ge x-1\ge 2d$, both $s_{x}$ and $s_{y}$ are greater than $s_{2d} = 2(d-1)(2d)^{\frac{d-2}{d-1}}$. Thus, we replace them by $s_{2d}$ and evaluate the integrals to get
\begin{align*}
\text{(III)} &\le \frac{2}{s_{2d}^{2}}  \int_{2d+1}^{\infty}\int_{x-1}^{\infty}A^{y-1}dy dx = \frac{-2}{As_{2d}^{2} \cdot \log A}  \int_{2d+1}^{\infty} A^{x-1}dx\\
&= \frac{-2}{As_{2d}^{2} \cdot \log A}  \int_{2d}^{\infty} A^{x}dx = \frac{2A^{2d}}{As_{2d}^{2} \cdot (\log A)^{2}} .
\end{align*}
Note that  $2A^{2d}\to 2e^{-1}$ and $(\log A)^{2} \sim (-\frac{1}{2d})^{2}=\frac{1}{4d^{2}}$, thus we get for all $d$ sufficiently large, 
\begin{align*}
\text{(III)} &\le \frac{C_{1}\cdot 4d^{2}}{ [2(d-1)(2d)^{\frac{d-2}{d-1}}]^{2}}\sim  \frac{C_{2}d^{\frac{2}{d-1}}}{d^{2}}\sim \frac{C_{3}}{d^{2}} = o\big( (\log d/d)^{2}\big),
\end{align*}
where $C_{1}, C_{2}$ and $C_{3}$ are absolute constants that do not depend on $d$ and may vary from line to line.
For (II), again since $s_{y}\ge s_{2d}$ when $y\ge 2d$ and $s_{x}=(2d-1)x^{\frac{d-2}{d-1}}$, we have
\begin{align*}
\text{(II)} &\le \frac{2}{s_{2d}}  \int_{2}^{2d+1}\frac{1}{s_{x}}\int_{x-1}^{\infty}A^{y-1}dy dx = \frac{-2}{A^{2}s_{2d} \cdot \log A}  \int_{2}^{2d+1} \frac{A^{x}}{s_{x}}dx\\
&\le \frac{-2}{2(d-1)A^{2}s_{2d} \cdot \log A}  \int_{2}^{2d+1} \frac{A^{x}x^{\frac{1}{d-1}}}{x}dx\\
&\le \frac{- 2(2d+1)^{\frac{1}{d-1}}}{2(d-1)A^{2}s_{2d} \cdot \log A}  \int_{2}^{2d+1} \frac{A^{x}}{x}dx \\
&\le  \frac{- 2(2d+1)^{\frac{1}{d-1}}}{2(d-1)A^{2}s_{2d} \cdot \log A}  \int_{1}^{\infty} \frac{A^{x}}{x}dx.
\end{align*}
The rightmost integral is of order $\log d$ as $d\to\infty$, following \eqref{eqn:lhopital-1}. Thus for all sufficiently large $d$,
\begin{align*}
\text{(II)} &\le  \frac{- C_{1} (2d+1)^{\frac{1}{d-1}}}{2(d-1)A^{2}s_{2d} \cdot \log A}  \log d.
\end{align*}
As $d\to\infty$, $(2d+1)^{\frac{1}{d-1}}\to 1$, $2(d-1)\log A \to -1$ and $A\to 1$, thus for all $d$ large, 
\[
\text{(II)} \le  \frac{C_{2} \log d}{s_{2d}} = \frac{C_{3} d^{\frac{1}{d-1}}\log d}{d^{2}} =O(\log d/d^{2})= o\big( (\log d/d)^{2}\big).
\]
Finally, for (I), we plug in the expression for $s_{x}$ and $s_{y}$ and use the fact that $x, y$ are both less than $2d+1$, and
\begin{align*}
\text{(I)}&\le \frac{(2d+1)^{\frac{2}{d-1}}}{4A(d-1)^{2}}\int_{1}^{\infty}\frac{2}{x}\int_{x-1}^{\infty}\frac{A^{y}}{y}dy dx.
\end{align*}
The double integral is $\sim (\log d)^{2}$ as $d\to \infty$. To see this, we apply L'Hopital's rule and take the derivative with respect to $d$:
\begin{align*}
\lim_{d\to\infty} \frac{\int_{1}^{\infty}\frac{2}{x}\int_{x-1}^{\infty}\frac{A^{y}}{y}dy dx}{(\log d)^{2}} = \lim_{d\to\infty}\frac{\frac{1}{2d^{2}}\int_{1}^{\infty}\frac{1}{x}\int_{x-1}^{\infty}A^{y-1}dy dx}{(\log d)\cdot  \frac{1}{d}}\\
=\lim_{d\to\infty}\frac{\frac{-1}{2dA\log A}\int_{1}^{\infty}\frac{A^{x-1} }{x}dx}{\log d}   =\lim_{d\to\infty}\frac{\frac{-1}{2dA^{2}\log A}\int_{1}^{\infty}\frac{A^{x}}{x} dx}{\log d} =1,
\end{align*}
where the last equality follows from \eqref{eqn:lhopital-1}, $2d\log A \to -1$, and $A\to 1$ as $d\to \infty$. Plugging this back to the upper bound of (I), we obtain that for all sufficiently large $d$, 
\begin{align*}
\text{(I)}&\le \frac{(2d+1)^{\frac{2}{d-1}}}{4A(d-1)^{2}}(\log d)^{2} (1+o(1)) =\left(\frac{\log d}{2d}\right)^{2}(1+o(1)),
\end{align*} 
which concludes the proof for $\E \tilde s^{2}_{0,1} \le \left(\frac{\log d}{2d}\right)^{2} (1+o(1))$ in the Exponentially-weighted case.
\section{Generalization to edge-weight distribution $F$\label{sec:general}}
Recall that if $X$ is an Exponential(1) random variable, then $X/a$ follows an Exponential$(a)$ distribution. Thus if the passage times are i.i.d. Exponential$(a)$ distributed, we have 
\begin{align*}
\E \tilde s_{0,1} \sim \frac{\log d}{2ad}, \quad \E \tilde s^{2}_{0,1} \le \left(\frac{\log d}{2ad}\right)^{2} (1+o(1)),
\end{align*}
which implies $\text{Var}(\tilde s_{0,1})=o\big( (\log d/d)^{2}\big)$ and $\frac{2ad}{\log d} \tilde s_{0,1}\xrightarrow{P} 1$ as $d\to\infty$. In this section, we generalize this in-probability convergence to other distributions $F$ with a density $a>0$ at $x=0$, namely, $F$ satisfying
\begin{align}
\label{eqn:distributions}
\left| F(x)/x -a \right| \to 0, \text{ as } x\to0.
\end{align}
Note that the rate of convergence in \eqref{eqn:distributions} is irrelevant for the purpose of generalizing the in-probability convergence. Since we are working with two different probability distributions in this section, we will always write a superscript $F$ when we work with $F$-distributed passage times; the superscript is omitted when the passage times are Exponential$(a)$ distributed. We couple the distribution $F$ with an Exponential$(a)$ distribution using the left-continuous inverse function $F^{\ast}: [0,1]\to \mathbb R$ of $F$, i.e., for any $y\in [0,1]$
\[
F^{\ast}(y) := \inf\{x: F(x)\ge y\}.
\]
It follows that if $(\tau_{e})_{e\in \mathbb E}$ are i.i.d. Exponential$(a)$-distributed edge weights in a first-passage percolation model, then $(\tau_{e}^{F})_{e\in \mathbb E}:=(h(\tau_{e}))_{e\in \mathbb E}$ are i.i.d. $F$-distributed, where
\[
h(t) :=F^{\ast}(1-e^{-at})= \inf\{x\ge 0: F(x)\ge 1-e^{-at}\}.
\]
Note that $h$ is a monotonically increasing function. Also, since $F(x)\sim ax$ as $x\to 0$, then $F^{\ast}(y)\sim y/a$ as $y\to 0$. This implies $\lim_{t\to0}h(t)/t = 1$, i.e., for any $\epsilon > 0$, there is $\delta = \delta_{\epsilon} > 0$ such that 
\begin{align}
\label{eqn:h-near-0}
(1-\epsilon )t \le h(t) \le (1+\epsilon)t,\quad \text{ for all }t\in [0, \delta].
\end{align}
Denote by $\tilde s_{0,1}^{F}$ in the same way as $\tilde s_{0,1}$ but for the case when the passage times are i.i.d. $F$-distributed. Let $\Gamma$ be one path that realizes $\tilde s_{0,1}$ in the Exponential$(a)$ case. Then, for any $\delta >0$,
\begin{align}
\label{eqn:alledgesmall}
P(\tau_{e}\le \delta \text{ for all } e \in \Gamma) \to 1.
\end{align}
This is because the complement of the event satisfies
\[
P(\tau_{e} > \delta \text{ for some } e \in \Gamma) \le P(\tilde s_{0,1} > \delta) = P(2ad \tilde s_{0,1}/\log d > 2 \delta ad/\log d) \to 0,
\]
as we have already showed that $\frac{2ad}{\log d} \tilde s_{0,1} \xrightarrow{P} 1$ as $d\to\infty$. 

The main result of this section is the following. 

\begin{proposition}\label{conv:probaforS}
Assume \eqref{eqn:conditionat0} and \eqref{eqn:finite-expectation}. As $d$ goes to infinity, we have 
\begin{equation*}
\frac{2ad}{\log d} \tilde s_{0,1}^{F} \xrightarrow{P} 1.
\end{equation*}
\end{proposition}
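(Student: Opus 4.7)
The plan is to transfer the Exponential-case in-probability convergence $\tfrac{2ad}{\log d}\tilde s_{0,1} \xrightarrow{P} 1$ (established in Sections \ref{sec:1st} and \ref{sec:2nd}) to the $F$-case via the coupling $\tau_e^F = h(\tau_e)$ already set up in this section. The key input is the two-sided control \eqref{eqn:h-near-0}: for every $\epsilon>0$ there is $\delta = \delta_\epsilon > 0$ with $(1-\epsilon)t \leq h(t) \leq (1+\epsilon)t$ whenever $0 \le t \leq \delta$. The upper and lower halves of the conclusion are handled separately, with essentially the same underlying idea.

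For the upper bound, let $\Gamma$ be a (finite) path realizing $\tilde s_{0,1}$ in the Exponential model (or an $\eta$-near-minimizer, which exists almost surely). By \eqref{eqn:alledgesmall}, with probability tending to $1$ every edge $e \in \Gamma$ satisfies $\tau_e \leq \delta$. On that event, since $\Gamma$ is admissible in the definition of $\tilde s_{0,1}^F$, applying \eqref{eqn:h-near-0} edge-by-edge yields
\begin{equation*}
\tilde s_{0,1}^F \le T^F(\Gamma) \;=\; \sum_{e \in \Gamma} h(\tau_e) \;\le\; (1+\epsilon)\sum_{e \in \Gamma}\tau_e \;=\; (1+\epsilon)\,\tilde s_{0,1}.
\end{equation*}
Combined with $\tfrac{2ad}{\log d}\tilde s_{0,1} \xrightarrow{P} 1$ this gives $\limsup_d P\!\left(\tfrac{2ad}{\log d}\tilde s_{0,1}^F > 1+\epsilon'\right) = 0$ for every $\epsilon'>\epsilon$; letting $\epsilon \downarrow 0$ produces the upper half of the in-probability convergence.

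For the lower bound I invert the coupling. Fix $\eta \in (0,1)$ and choose $\epsilon>0$ small enough that $(1-\eta)/(1-\epsilon) \le 1 - \eta/2$, then set $\delta = \delta_\epsilon$ as in \eqref{eqn:h-near-0}. For all sufficiently large $d$, $(1-\eta)\tfrac{\log d}{2ad} < h(\delta)$, since $h(\delta)$ is a fixed positive constant. On the event $\{\tilde s_{0,1}^F < (1-\eta)\tfrac{\log d}{2ad}\}$, let $\Gamma^F$ be an almost-sure minimizer of $\tilde s_{0,1}^F$ (again using a near-minimizer otherwise). Every edge weight along $\Gamma^F$ is bounded by the total, so $\tau_e^F = h(\tau_e) < h(\delta)$ for each $e \in \Gamma^F$; by monotonicity of $h$ (applied contrapositively: $\tau_e > \delta \Rightarrow h(\tau_e) \geq h(\delta)$), this forces $\tau_e \leq \delta$. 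Then, using the left inequality in \eqref{eqn:h-near-0},
\begin{equation*}
\tilde s_{0,1} \;\le\; T(\Gamma^F) \;=\; \sum_{e \in \Gamma^F}\tau_e \;\le\; \frac{1}{1-\epsilon}\sum_{e \in \Gamma^F}\tau_e^F \;=\; \frac{\tilde s_{0,1}^F}{1-\epsilon} \;<\; \Bigl(1-\tfrac{\eta}{2}\Bigr)\frac{\log d}{2ad}.
\end{equation*}
The latter event has probability tending to $0$ by the Exponential in-probability result, so the original event does too, completing the lower bound.

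The main subtlety is the lower half: unlike the upper bound we do not know the optimal $F$-path in advance, so \eqref{eqn:alledgesmall} is not directly available. The workaround is the simple but crucial observation that if $\tilde s_{0,1}^F$ is small, then every individual edge on its minimizer is automatically small (no single summand can exceed the total), which puts us back in the linear regime $h(t)\approx t$ where the coupling can be inverted. Everything else is bookkeeping around the two-sided estimate \eqref{eqn:h-near-0}.
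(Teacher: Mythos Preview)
Your proof is correct and follows essentially the same approach as the paper: both use the coupling $\tau_e^F = h(\tau_e)$ together with the two-sided estimate \eqref{eqn:h-near-0}, handling the upper bound via the Exponential minimizer $\Gamma$ and the lower bound via the $F$-minimizer $\Gamma^F$. Your lower-bound argument is slightly more streamlined---you observe directly that $\tilde s_{0,1}^F$ being small forces all edge weights on $\Gamma^F$ to be small, whereas the paper splits into the two cases $\{\tau_e \le \delta \text{ for all } e\in\Gamma^F\}$ and its complement and shows the latter contributes zero---but the underlying idea is identical.
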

\begin{proof}[Proof of Proposition \ref{conv:probaforS}]
We start with an upper bound for $\tilde s_{0,1}^{F}$.
Consider any $\eta > 0$. Choose $\epsilon \in (0, \eta)$ and $\delta>0$ according to $\epsilon$ such that \eqref{eqn:h-near-0} is satisfied. Then 
\begin{align*}
P\left(\frac{2ad}{\log d}\tilde s^{F}_{0,1} \le 1+\eta \right) &\ge P\left(\frac{2ad}{\log d}\tilde s^{F}_{0,1} \le 1+\eta,  \tau_{e}\le \delta \text{ for all } e \in \Gamma\right)\\
&\ge P\left(\frac{2ad}{\log d} T^{F}(\Gamma) \le 1+\eta,  \tau_{e}\le \delta \text{ for all } e \in \Gamma\right),
\end{align*}
where $T^{F}(\Gamma)$ denotes the passage time along the path $\Gamma$ with edge weights $(\tau_{e}^{F})_{e\in \mathbb E}:=(h(\tau_{e}))_{e\in \mathbb E}$
\[
T^{F}(\Gamma) := \sum_{e\in \Gamma}\tau_{e}^{F}= \sum_{e\in \Gamma}h(\tau_{e}).
\]
On the event that $\{\tau_{e}\le \delta \text{ for all } e \in \Gamma\}$, we have $h(\tau_{e})\le (1+\epsilon)\tau_{e}$, and thus $\sum_{e\in \Gamma}h(\tau_{e})\le (1+\epsilon)\sum_{e\in \Gamma}\tau_{e} = (1+\epsilon)\tilde s_{0,1}$. Thus, the probability above is bounded from below by
\begin{align*}
P\left(\frac{2ad}{\log d}\tilde s^{F}_{0,1} \le 1+\eta \right)&\ge
P\left(\frac{2ad}{\log d}\tilde s_{0,1}\le \frac{1+\eta}{1+\epsilon},  \tau_{e}\le \delta \text{ for all } e \in \Gamma\right) \to 1,
\end{align*}
due to $\frac{1+\eta}{1+\epsilon} > 1$, $\frac{2ad}{\log d} \tilde s_{0,1}\xrightarrow{P} 1$ and \eqref{eqn:alledgesmall}. The lower bound is similar. Let $\Gamma^{F}$ be one path that realizes $\tilde s_{0,1}^{F}$ in the case that edge weights are i.i.d. $F$-distributed. Take any $\eta > 0$ and $\epsilon \in (0, \eta)$, and choose $\delta=\delta_{\epsilon}$ according to $\epsilon$ such that \eqref{eqn:h-near-0} is satisfied. Then 
$\tilde s_{0,1}^{F}=\sum_{e\in \Gamma^{F}}\tau_{e}^{F} = \sum_{e\in \Gamma^{F}}h(\tau_{e})$
\begin{align*}
&P\left(\frac{2ad}{\log d}\tilde s^{F}_{0,1} \le 1-\eta \right) =P\left(\frac{2ad}{\log d}\sum_{e\in \Gamma^{F}}h(\tau_{e})\le 1-\eta \right)\\
&=P\left(\frac{2ad}{\log d}\sum_{e\in \Gamma^{F}}h(\tau_{e})\le 1-\eta,\ \tau_{e}\le \delta \text{ for all } e \in \Gamma^{F} \right)\\
&\qquad + P\left(\frac{2ad}{\log d}\sum_{e\in \Gamma^{F}}h(\tau_{e})\le 1-\eta,\ \tau_{e}> \delta \text{ for some } e \in \Gamma^{F} \right)\\
&\le P\left(\frac{2ad}{\log d}\sum_{e\in \Gamma^{F}}(1-\epsilon)\tau_{e}\le 1-\eta,\ \tau_{e}\le \delta \text{ for all } e \in \Gamma^{F} \right) \\
&\qquad + P\left(\frac{2ad}{\log d}h(\delta)\le 1-\eta,\ \tau_{e}> \delta \text{ for some } e \in \Gamma^{F} \right)\\
&\le P\left(\frac{2ad}{\log d}\tilde s_{0,1}\le \frac{1-\eta}{1-\epsilon} \right) +  P\left(\frac{2ad}{\log d}(1-\epsilon)\delta \le 1-\eta \right).
\end{align*}
The first term vanishes as $d\to \infty$ because $\frac{2ad}{\log d} \tilde s_{0,1} \xrightarrow{P} 1$  and  $\frac{1-\eta}{1-\epsilon}<1$; and the second term is 0 when $d$ is sufficiently large.  This complete the proof for $\frac{2ad}{\log d} \tilde s_{0,1}^{F} \xrightarrow{P}1$.
\end{proof}

\section{Proofs of Theorems \ref{thm:mainUB}\label{sec:ui} and \ref{thm-convPL1}}
In this section, we prove Theorem \ref{thm:mainUB} and Theorem \ref{thm-convPL1}. 
For this, we show the following proposition.
\begin{proposition}\label{prop:lastone} Assume \eqref{eqn:conditionat0} and \eqref{eqn:finite-expectation}. Then, as $d \to \infty$,
\begin{equation*}\frac{2ad}{\log d}\E \tilde s_{0,1}^{F}\to 1.
\end{equation*}
\end{proposition}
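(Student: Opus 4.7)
The plan is to upgrade the in-probability convergence of Proposition \ref{conv:probaforS} to convergence of expectations. First, the lower half $\liminf_{d\to\infty}(2ad/\log d)\E\tilde s_{0,1}^F \ge 1$ is immediate by combining Proposition \ref{thm-lowerbd} with the standard subadditivity estimate $\mu_d(e_1)\le \E\tilde s_{0,1}^F$ mentioned in the introduction. The entire task therefore reduces to proving the matching upper bound $\limsup_{d\to\infty}(2ad/\log d)\E\tilde s_{0,1}^F \le 1$.

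For the upper bound I would work inside the coupling of Section \ref{sec:general}, so that $\tau_e^F = h(\tau_e)$ with $\tau_e$ i.i.d.\ Exponential$(a)$. Fix $\epsilon>0$ and pick $\delta=\delta_\epsilon$ from \eqref{eqn:h-near-0}. Let $\Gamma$ be a path realizing $\tilde s_{0,1}$ in the Exponential model and set $B = \{\tau_e\le \delta \text{ for all } e\in\Gamma\}$. On $B$ the coupling gives the pointwise bound $\tilde s_{0,1}^F \le T^F(\Gamma) \le (1+\epsilon)\tilde s_{0,1}$, so Section \ref{sec:1st} immediately yields
\[
\E[\tilde s_{0,1}^F \mathbf 1_B] \le (1+\epsilon)\E\tilde s_{0,1} = (1+\epsilon)\frac{\log d}{2ad}(1+o(1)).
\]

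The bulk of the work is to show $\E[\tilde s_{0,1}^F \mathbf 1_{B^c}] = o(\log d/d)$. The naive single-edge bound $\tilde s_{0,1}^F \le h(\tau_{(\mathbf 0,e_1)})$ is too weak, since after multiplying by $2ad/\log d$ the resulting term does not vanish. I would instead use the tighter majorant $\tilde s_{0,1}^F \le Y$, where
\[
Y := \min_{\substack{j=2,\dots,d\\\pm}}\bigl(h(\tau_{(\mathbf 0,\pm e_j)}) + h(\tau_{(\pm e_j,\pm e_j+e_1)})\bigr)
\]
is the minimum $F$-passage time over the $2(d-1)$ edge-disjoint length-$2$ paths $\mathbf 0\to\pm e_j\to \pm e_j+e_1$ into $\mathbb H_1$. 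Splitting $\E[Y\mathbf 1_{B^c}]\le K\,P(B^c) + \E[Y\mathbf 1_{\{Y>K\}}]$ for a fixed constant $K>2\E\tau^F$, both pieces can be controlled: (i) Section \ref{sec:2nd}'s second-moment bound and Markov's inequality give $P(B^c)\le P(\tilde s_{0,1}>\delta)\le \E\tilde s_{0,1}^2/\delta^2 = O((\log d/d)^2)$; (ii) Markov's inequality $P(\tau_1^F+\tau_2^F>t)\le 2\E\tau^F/t$ for $t>2\E\tau^F$, raised to the $2(d-1)$th power via independence of the length-$2$ paths, makes $\E[Y\mathbf 1_{\{Y>K\}}]$ exponentially small in $d$. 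This yields $\E[Y\mathbf 1_{B^c}] = O((\log d/d)^2)$ and hence $(2ad/\log d)\E[\tilde s_{0,1}^F\mathbf 1_{B^c}] = O(\log d/d) = o(1)$. Combining the two contributions, $\limsup_d (2ad/\log d)\E\tilde s_{0,1}^F \le 1+\epsilon$ for every $\epsilon>0$, and sending $\epsilon\downarrow 0$ concludes.

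The main obstacle is the tail estimate just described: one must produce a pointwise majorant of $\tilde s_{0,1}^F$ whose integral over the exceptional event $B^c$ is $o(\log d/d)$. The length-$2$ minimum $Y$ succeeds even though typical values of $Y$ are of order $1/\sqrt d\gg \log d/d$, because $B^c$ has only polynomial probability $O((\log d/d)^2)$ — so the bulk $\{Y\le K\}$ contributes only $O(K(\log d/d)^2)$ — while the tail of $Y$ past any fixed threshold is exponentially small in $d$, using only the finite first moment of $F$ guaranteed by \eqref{eqn:finite-expectation}. No higher moment of $F$ is needed.
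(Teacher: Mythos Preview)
Your proof is correct and takes a genuinely different route from the paper's. The paper proceeds by uniform integrability: it invokes Proposition~\ref{conv:probaforS} and then shows that $\{(2ad/\log d)\,\tilde s_{0,1}^F\}_{d\ge 1}$ is UI by dominating $\tilde s_{0,1}^F$ with Kesten's ``search-and-cross'' construction, which requires the logarithmic rate in \eqref{eqn:conditionat0} to guarantee (via a second-moment path count) that a fast path exists with probability bounded away from zero. You instead split the expectation directly on the coupling event $B$, control $P(B^c)$ with the second-moment bound of Section~\ref{sec:2nd}, and absorb the remainder using the elementary length-$2$ minimum $Y$, whose tail past any fixed level is exponentially small in $d$ by independence and Markov's inequality. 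Your argument is shorter and, notably, never uses the rate in \eqref{eqn:conditionat0}: it goes through under only $F(x)/x\to a$ and $\E\tau_e^F<\infty$, so it in fact establishes Proposition~\ref{prop:lastone} (and hence Theorems~\ref{thm:mainUB} and~\ref{thm-convPL1}) under a weaker hypothesis than stated. The paper's UI route is more modular---cleanly separating the in-probability limit from the integrability---but the price is the Kesten path-counting, which is exactly where the extra assumption enters.
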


\begin{proof}[Proof of Proposition \ref{prop:lastone}]
In view of Proposition \ref{conv:probaforS}, it suffices to show that the collection of random variables $\{X_{d}\}_{d\ge 1}:=\{\frac{2ad}{\log d}\tilde s_{0,1}^{F}\}_{d\ge 1}$ is uniformly integrable, i.e.,
\begin{align}
\label{eqn:ui-for-s01}
\lim_{M\to \infty}\sup_{d}\E \left[X_{d}\mathbbm 1_{\{X_{d}\ge M\}}\right] = 0.
\end{align}
To do this, we adopt the ``search-and-cross'' strategy that were used in \cite{KES86AFPP} and \cite{AT16highdimenFPP} when estimating $\mathbb E\tilde s_{0,1}^{F}$, which we briefly describe below. In order to get to $\mathbb H_{1}$ from $\mathbf 0$ quickly, one can first make a move in one of the $e_{p+2}, \ldots, e_{d}$ directions and then search for a fast path $\gamma$ (of length $n$) in a subspace of $\mathbb Z^{d}$ spanned by $\{\pm e_{2}, \ldots, \pm e_{p+1}\}$ that ends with the last step in the $e_{1}$ direction leading to $\mathbb H_{1}$. For $j=p+2, \ldots, d$, if the first step has passage time $\tau_{e_{j}}^{F} \le y$ and the path $T^{F}(\gamma)\le x$ (denote this event by $F_{j}$), follow $e_{j}$ and then this path $\gamma$ to get to $\mathbb H_{1}$; otherwise move directly from $\mathbf 0$ to $\mathbb H_{1}$ using the edge in the $e_{1}$ direction.  Thus $\tilde s_{0,1}^{F}$ is bounded from above by
\[
\tilde s_{0,1}^{F} \le (x+y)\mathbbm 1_{\cup_{j=p+2, \ldots, d}^{\infty}F_{j}} + \mathbbm 1_{\cap_{j=p+2, \ldots, d}^{\infty}F_{j}^{c}} \tau_{e_{1}}^{F}
\]
Note that the choices for $x, y$, the length $n$ of the fast path $\gamma$, and the dimension $p$ of the search space are different in  \cite{KES86AFPP} and \cite{AT16highdimenFPP}. For example, in \cite{KES86AFPP}, Kesten chose
$$
p =\left \lfloor \frac{d}{2}\right\rfloor, \quad n = \left \lfloor \frac{3}{4}\log d\right\rfloor, \quad x = \frac{9\log d}{4ad},
$$
and with these choices, he showed that the probability of finding such a fast path $\gamma$ is at least $\frac{1}{4}$, based on a second-moment method. The argument relies on a convergence rate of order $O(|\log x|^{-1})$ as shown in \eqref{eqn:conditionat0} for the edge weight-distribution $F$, which ensured that there were sufficiently many fast paths as $d\to \infty$; see \cite[pp.245]{KES86AFPP} for the details on the moment computation. Here we adopt Kesten's choices for $p, n$ and $x$, but other choices can also work. Moreover, for small enough $y$,  the probability $P(\tau_{e_{j}}^{F}\le y) \sim ay$ due to \eqref{eqn:distributions}. Thus, by choosing $y=32\log d/(ad)$, we have that for all $d$ sufficiently large (say, $d > d_{0}$),
\[
P(\tau_{e_{j}}^{F}\le y) \ge \frac{ay}{2} = \frac{16 \log d}{d},
\]
which leads to 
\[
P(F_{j}) \ge \frac{1}{4} \cdot P(\tau_{e_{j}}^{F}\le y)  \ge \frac{4\log d}{d}
\]
because for $j=p+2, \ldots, d$, the random variable $\tau_{e_{j}}^{F}$ is independent of the edge weights along the fast path $\gamma$. It then follows that
\begin{align*}
\E \left[X_{d}\mathbbm 1_{\{X_{d}\ge M\}}\right] &\le \frac{2ad}{\log d} \E \left[(x+y)\mathbbm 1_{\cup_{j=p+2, \ldots, d}^{\infty}F_{j}}\mathbbm 1_{\{\tilde s_{0,1}^{F}\ge M\log d/(2ad)\}}\right] \\
&\quad + \frac{2ad}{\log d}\E \left[\mathbbm 1_{\cap_{j=p+2, \ldots, d}^{\infty}F_{j}^{c}}\mathbbm 1_{\{\tilde s_{0,1}^{F}\ge M\log d/(2ad)\}}\right] \E ( \tau_{e_{1}}^{F}).
\end{align*}
If we choose $M\ge 100$, then the event $\tilde s_{0,1}^{F} > \frac{M\log d }{ 2ad} > x+y$ implies that none of the $F_{j}$ events would happen, and the first term above is zero. Thus, for $M\ge 100$, we have
\begin{align*}
\E \left[X_{d}\mathbbm 1_{\{X_{d}\ge M\}}\right] &\le \frac{2ad}{\log d}\E \left[\mathbbm 1_{\cap_{j=p+2, \ldots, d}^{\infty}F_{j}^{c}}\right] \E ( \tau_{e_{1}}^{F})\\
&\le \frac{2ad}{\log d}\left(1-\frac{4\log d}{d}\right)^{d-p-1}\E ( \tau_{e_{1}}^{F})\\
&\le \frac{2ad}{\log d}\left(1-\frac{4\log d}{d}\right)^{\frac{d}{3}}\E ( \tau_{e_{1}}^{F}) \le \frac{2ad}{\log d}\cdot C d^{-\frac{4}{3}} \to 0, \quad \text{as } d\to\infty.
\end{align*}
Therefore, for any $\epsilon > 0$, choose $d_{1}$ such that $\E \left[X_{d}\mathbbm 1_{\{X_{d}\ge 100\}}\right] \le \epsilon/2$ for all $d > d_{1}$. Then for $M\ge 100$,
\begin{align*}
\sup_d \E[X_d \mathbbm{1}_{\{X_d \geq M\}}] &\le \sup_{d \le d_1} \E[X_d \mathbbm{1}_{\{X_d \geq M\}}] + \sup_{d > d_1} \E[X_d \mathbbm{1}_{\{X_d \geq 100\}}] \\
&\le \sup_{d \le d_1} \E[X_d \mathbbm{1}_{\{X_d \geq M\}}] + \frac{\epsilon}{2}.
\end{align*}
The first term vanishes by sending $M\to \infty$. This finishes the proof of uniform integrability \eqref{eqn:ui-for-s01}. 
\end{proof}

The proof of our two main theorems are now straight-forward.

\begin{proof}[Proof of Theorem \ref{thm:mainUB}]
It follows by combining Propositions \ref{prop:lastone} with the fact that $\mu_{d}^{F}(e_{1})\le \E \tilde s_{0,1}^{F}$, see \cite[pp.246]{KES86AFPP} or \cite[Lemma 5.2]{SW78}. 
\end{proof}

\begin{proof}[Proof of Theorem \ref{thm-convPL1}]
The proof follows by combining Proposition \ref{conv:probaforS} and the uniform integrability  \eqref{eqn:ui-for-s01}. 
\end{proof}

\end{document}